\definecolor{vegasgold}{rgb}{0.77, 0.7, 0.35}
\definecolor{darkgoldenrod}{rgb}{0.72, 0.53, 0.04}
\definecolor{gold(metallic)}{rgb}{0.83, 0.69, 0.22}
\newcommand{\cF}{\mathcal{F}}
\newcommand{\cO}{\mathcal{O}}
\newcommand{\Z}{\mathbb{Z}}
\DeclareFontFamily{U}{wncy}{}
\DeclareFontShape{U}{wncy}{m}{n}{<->wncyr10}{}
\DeclareSymbolFont{mcy}{U}{wncy}{m}{n}
\DeclareMathSymbol{\Sh}{\mathord}{mcy}{"58}
\newcommand{\Q}{\mathbb{Q}}
\newcommand{\op}[1]{\operatorname{#1}}
\newcommand{\F}{\mathbb{F}}
\newtheorem{thm}{Theorem}[section]
\newtheorem{lem}[thm]{Lemma}
\newtheorem{prop}[thm]{Proposition}
\newtheorem{conj}[thm]{Conjecture}
\newtheorem{defn}[thm]{Definition}
\newtheorem*{rem}{Remark}
\newtheorem*{oldproof}{Proof}
\renewenvironment{proof}[1][{}]{\begin{oldproof}[#1]}{\qed\end{oldproof}}
\numberwithin{equation}{section}
\begin{document}

\title{Remarks on Hilbert's tenth problem and the Iwasawa theory of Elliptic curves}

\author{Anwesh Ray}
\address{Department of Mathematics,
University of British Columbia,
Vancouver BC, Canada V6T 1Z2.}
\email{anweshray@math.ubc.ca}


\begin{abstract}
Let $E$ be an elliptic curve with positive rank over a number field $K$ and let $p$ be an odd prime number. Let $K_{\op{cyc}}$ be the cyclotomic $\Z_p$-extension of $K$ and $K_n$ denote its $n$-th layer. The Mordell--Weil rank of $E$ is said to be \emph{constant} in the cyclotomic tower of $K$ if for all $n$, the rank of $E(K_n)$ is equal to the rank of $E(K)$. We apply techniques in Iwasawa theory to obtain explicit conditions for the rank of an elliptic curve to be constant in the above sense. We then indicate the potential applications to Hilbert's tenth problem for number rings.
\end{abstract}


\keywords{Hilbert's tenth problem, Iwasawa theory, elliptic curves, variation of Mordell--Weil ranks in towers of number fields}

\maketitle
\section{Introduction}\label{s:1}
\par Hilbert's tenth problem for $\Z$ states that there is no Turing machine that takes as input polynomial equations over $\Z$ and decides whether they have nontrivial solutions. Matiyasevich \cite{matiyasevich1970diophantineness} resolved Hilbert's tenth problem over $\Z$. It is well known that if $\Z$ is diophantine as a subset of $\cO_K$, then, the analogue of Hilbert's tenth problem has a negative solution over $\cO_K$. The following conjecture is due to Denef and Lipschitz \cite{denef1978diophantine}.

\begin{conj}[Denef-Lipshitz]\label{main conjecture DL}
For every number field $K$, $\Z$ is a diophantine subset of $\mathcal{O}_K$. 
\end{conj}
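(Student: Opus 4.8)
The plan is to reduce Conjecture~\ref{main conjecture DL} to a statement about non-growth of Mordell--Weil ranks, and then to invoke the Iwasawa-theoretic criteria developed below. The starting point is the now-standard diophantine reduction, going back in various forms to Denef, Pheidas, Shlapentokh, Poonen, and Mazur--Rubin: if $L/F$ is an extension of number fields and there exists an elliptic curve $E/F$ with
\[
  \op{rank} E(F) = \op{rank} E(L) = 1,
\]
then $\cO_F$ is a diophantine subset of $\cO_L$. Since the diophantine relation is transitive and $\Z = \cO_\Q$ is trivially diophantine in itself, it follows that $\Z$ is diophantine in $\cO_K$ as soon as $K$ can be joined to $\Q$ by a finite chain $\Q = F_0 \subseteq F_1 \subseteq \cdots \subseteq F_m$ with $K \subseteq F_m$, such that at each stage some elliptic curve of rank one over $F_i$ retains rank one over $F_{i+1}$. (Higher-rank versions of the reduction are also known, which is why the paper allows arbitrary positive rank.)

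Iwasawa theory contributes such chains by using cyclotomic $\Z_p$-extensions as the elementary building blocks. Fix a number field $F$ and an odd prime $p$, and let $F_n$ be the $n$-th layer of $F_{\op{cyc}}/F$, so $[F_{n+1}:F_n]=p$. If $E/F$ has positive rank and its Mordell--Weil rank is \emph{constant in the cyclotomic tower} in the sense of the abstract, then applying the reduction to each step $F_n\subseteq F_{n+1}$ shows that $\cO_F$ is diophantine in $\cO_{F_n}$ for every $n$, with the crucial point that the \emph{same} curve $E$ works at every layer. The main results of the paper are exactly what certifies this constancy: for $E$ of good ordinary reduction above $p$ with $E(F)[p]=0$ and trivial $p$-part of $\Sh(E/F)$ and of the local Tamagawa factors at the primes of bad reduction, Mazur's control theorem --- together with the fact that the $p$-primary Selmer group of $E$ over $F_{\op{cyc}}$ is $\Lambda$-cotorsion --- yields $\mu=0$ and $\lambda=\op{rank}E(F)$ for the associated Iwasawa invariants, and hence $\op{rank}E(F_n)=\op{rank}E(F)$ for all $n$. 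Each step of the chain then asks only for an elliptic curve of rank one over $F_i$ meeting these local conditions, which one expects to produce among quadratic twists after fixing an auxiliary prime $p_i$ of good ordinary reduction.

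The step I expect to be the genuine obstacle is the assembly of the chain for an \emph{arbitrary} number field $K$. When $K$ is abelian over $\Q$, Kronecker--Weber places $K$ inside a compositum of layers of cyclotomic $\Z_p$-extensions of $\Q$ for finitely many $p$, and the argument closes --- conditionally on the finiteness of $\Sh$ for the auxiliary curves, which is what the $\mu=0$ and $\lambda=\op{rank}$ inputs ultimately rest on. For non-abelian $K$ there is no such decomposition into cyclotomic layers, and one is forced to interleave the cyclotomic steps with base change along non-abelian $p$-extensions; controlling Mordell--Weil ranks along those demands control theorems over non-commutative Iwasawa algebras, or the corresponding non-commutative main conjectures, which are not available in the required generality. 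The proposal is thus, as it stands, a \emph{conditional} reduction of Conjecture~\ref{main conjecture DL}, and the two gaps --- (i) removing the dependence on finiteness of $\Sh$ in the computation of the Iwasawa invariants, and (ii) passing from cyclotomic $\Z_p$-towers to the more general towers needed to reach $K$ from $\Q$ --- are where the difficulty concentrates. In the body of the paper we carry out (i) and (ii) in the ranges that current techniques allow and record the resulting unconditional consequences for Hilbert's tenth problem over the rings $\cO_{F_n}$.
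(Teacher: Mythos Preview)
The statement in question is Conjecture~\ref{main conjecture DL}, which the paper \emph{states as an open conjecture} and does not prove; there is accordingly no proof in the paper against which to compare your proposal. You yourself concede that your argument is only a conditional reduction with two unresolved gaps, so what you have written is a discussion of a possible strategy, not a proof, and it should not be presented as one.

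Beyond that structural issue, several assertions in your sketch misrepresent the paper and are false on their own terms. The main theorem (Theorem~\ref{main}) does not assume $E(F)[p]=0$; the decisive hypothesis you omit is that the normalized $p$-adic regulator $R_p(E/K)$ be a $p$-adic unit, which is what feeds into the Perrin-Riou--Schneider leading-term formula and forces $\lambda_p(E/K)=\op{rank}E(K)$. The paper never dispenses with the assumption $\Sh(E/K)[p^\infty]=0$ and never treats non-cyclotomic or non-abelian towers, so your sentence ``in the body of the paper we carry out (i) and (ii)'' is simply incorrect. Finally, your Kronecker--Weber step fails as stated: an abelian extension of $\Q$ need not lie inside a compositum of cyclotomic $\Z_p$-extensions of $\Q$ --- for instance $\Q(\sqrt{-3})$ has degree $2$ over $\Q$ and lies in no $\Z_p$-tower over $\Q$ for any odd $p$ --- so the abelian case does not close by the mechanism you describe. (That case is in fact already known unconditionally by entirely different methods; see the reference to \cite{shapiro1989diophantine} in the introduction.)
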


There are various special cases in which the above conjecture has been resolved. Conjecture \ref{main conjecture DL} is known to be true for all number fields $K$ such that
\begin{itemize}
    \item $K$ is either totally real or a quadratic extension of a totally real number field (cf. \cite{denef1980diophantine,denef1978diophantine}), 
    \item $K$ has exactly one complex place (cf. \cite{pheidas1988hilbert, shlapentokh1989extension, videla16decimo}),
    \item $K/\Q$ is abelian (cf. \cite{shapiro1989diophantine}).
\end{itemize}

It is natural to study the validity of Conjecture \ref{main conjecture DL} for naturally occurring families of number fields $K$. For instance, Garcia-Fritz and Pasten \cite{garcia2020towards} prove the above conjecture for number fields of the form $\Q(p^{1/3} , \sqrt{-q})$ where $p$ and $q$ range through certain explicit sets of primes of positive Dirichlet density. In this paper, we study Conjecture \ref{main conjecture DL} for certain towers of number field extensions of a fixed number field $K$. More precisely, let $K$ be a number field and $p$ be an odd prime number. Set $K_{\op{cyc}}$ to denote the \emph{cyclotomic $\Z_p$-extension} of $K$. The unique subfield of $K_{\op{cyc}}$ that is of degree $p^n$ over $K$ is denoted by $K_n$ and is called the \emph{$n$-th layer}.
\par Iwasawa studied the growth of the $p$-primary part of the class group of $K_n$ as a function of $n$ (cf. \cite{iwasawa1973zl}). In section \ref{s 3} of this article, we prove that $\mathcal{O}_K$ is a diophantine subset of $\mathcal{O}_{K_n}$ for all $n$, provided there exists an elliptic curve $E_{/K}$ satisfying certain specific additional conditions. The main result is Theorem \ref{main}, which shows that if there exists a suitable elliptic curve $E_{/K}$, then $\mathcal{O}_K$ is a diophantine subset of $\mathcal{O}_{K_n}$ for all $n$. It follows from this that if Conjecture \ref{main conjecture DL} is satisfied for $K$, then it is satisfied for $K_n$ for all $n$.
\par In section \ref{s 4}, we fix an elliptic curve $E_{/\Q}$ of positive Mordell--Weil rank and also fix a number field $K$. We provide circumstantial evidence to show that there is a set of primes $p$ of positive lower density for which the conditions of Theorem \ref{main} are satisfied. This expectation (Conjecture \ref{Omega conj})is based on computational evidence for the behaviour of the \emph{$p$-adic regulator} of an elliptic curve.

\par \emph{Acknowledgment:} The author thanks the anonymous referee for helpful suggestions.

\section{Preliminaries and Notation}\label{s:2}
\par The contents of this section are preliminary in nature. In $\S$\ref{s 2.1}, we introduce the notion of an \emph{integrally Diophantine extension} of number rings, and its implications to Hilbert's tenth problem for number fields. We then in $\S$\ref{s 2.2} recall basic concepts from the Iwasawa theory of elliptic curves.
\subsection{Integrally Diophantine Extensions}\label{s 2.1}
\par Let $A$ be a commutative ring and $n>0$ be an integer. Let $A^n$ be free $A$-module of rank $n$ consisting of elements of the form $a=(a_1, \dots, a_n)$, with entries $a_i\in A$. For $m\geq 0$, $a=(a_1,\dots, a_n)\in A^n$, $b=(b_1, \dots, b_m)\in A^m$, the element $(a,b)\in A^{n+m}$ is given by $(a_1, \dots, a_n, b_1, \dots, b_m)$. Given a set of polynomials $F_1, \dots, F_k\in A[x_1,\dots, x_n, y_1, \dots, y_m]$, and $a\in A^n$ set
\[\cF(a;F_1, \dots, F_k):=\{b\in A^m\mid F_i(a,b)=0\text{ for all }1\leq i\leq k\}.\]
Given a number field $K$, denote by $\cO_K$ its ring of integers.
\begin{defn}
Let $S$ be a subset of $A^n$. The set $S$ is \emph{diophantine} in $A^n$ if for some $m\geq 0$, there are polynomials $F_1, \dots, F_k\in A[x_1,\dots, x_n, y_1, \dots, y_m]$ such that
\[S=\{a\in A^n\mid \cF(a;F_1, \dots, F_k)\text{ is not empty}\}.\]
An extension of number fields $L/K$ is said to be integrally diophantine if $\cO_K$ is a diophantine subset of $\cO_L$. 
\end{defn}
It follows from standard arguments that if $L/\Q$ is an integrally diophantine extension of number fields, then Hilbert's tenth problem has a negative answer for $\cO_L$ (see \cite[p.385]{denef1978diophantine}). Moreover, if $L/F$ and $F/K$ are integrally diophantine extensions of number fields, then so is $L/K$.
\par We recall Shlapentokh's criterion for an extension of number fields to be integrally diophantine.
\begin{thm}[Shlapentokh \cite{shlapentokh2008elliptic}]\label{shlap thm}
Let $L/K$ be an extension of number fields. Suppose there is an elliptic curve $E_{/K}$ such that $\op{rank} E(L)=\op{rank} E(K)>0$. Then, $L/K$ is an integrally diophantine extension. 
\end{thm}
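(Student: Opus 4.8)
The plan is to manufacture, diophantinely over $\cO_L$, an infinite subset of $\cO_K$ out of the multiples of a point of infinite order on $E$, and then to upgrade this to a diophantine definition of $\cO_K$ itself via Shlapentokh's vertical-method toolkit. First I would fix an affine Weierstrass model $y^2=x^3+ax+b$ with $a,b\in\cO_K$, and, using $\op{rank}E(K)>0$, fix a point $P\in E(K)$ of infinite order. The one essential consequence of the hypothesis $\op{rank}E(L)=\op{rank}E(K)$ is that $E(K)$ has finite index in $E(L)$ modulo torsion: since $E(L)_{\op{tors}}$ is finite, there is an integer $N\geq 1$ with $[N]\,E(L)\subseteq E(K)$. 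This is what allows a condition imposed on an arbitrary $L$-point to be transferred, after multiplication by $N$, to a condition living over $K$.

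Next I would set up the elliptic analogue of the Pell-equation mechanism on $\langle P\rangle$. Writing $[n]P=(x_n,y_n)$ for $n\neq 0$, the denominator ideals of the $x_n$ form an elliptic divisibility sequence: for a prime $\mathfrak p$ of $K$ of good reduction one has $v_{\mathfrak p}(x_n)<0$ exactly when $m_{\mathfrak p}\mid n$, where $m_{\mathfrak p}$ is the order of $\bar P$ in $E(\F_{\mathfrak p})$, and in that range $v_{\mathfrak p}(x_n)$ is governed by the formal group and depends only on $v_{\mathfrak p}(n/m_{\mathfrak p})$. Using these divisibility and congruence relations — exactly as in the work of Denef, Pheidas and Shlapentokh — one writes down finitely many polynomial conditions (over $\cO_K$, hence a fortiori over $\cO_L$) that characterize, among points $Q\in E(L)$, those for which $[N]Q$ lies in $\langle P\rangle$, and that simultaneously recover from coordinate data the integer $n$ with $[N]Q=[n]P$. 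This exhibits, diophantinely over $\cO_L$, a controlled copy of $\Z$ together with the $K$-points $[n]P$ attached to it.

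From this I would extract $\cO_K$ in two stages. The elliptic divisibility sequence is first massaged into an infinite diophantine (over $\cO_L$) subset $W$ of the honest ring $\cO_K$ — not merely a ring of $S$-integers — whose elements grow without bound at every archimedean place of $K$; the subtlety is that the $x_n$ themselves have growing denominators, so one must form suitable integral combinations (differences or quotients of terms of the sequence) and control their archimedean sizes. Then Shlapentokh's weak vertical method (built on norm equations) converts such a $W$ into a full diophantine definition of $\cO_K$ over $\cO_L$, which is the assertion of the theorem. The routine bookkeeping — replacing $P$ by $[N']P$ so that $\langle P\rangle$ genuinely lies inside $[N]E(L)$, discarding the finitely many primes of bad reduction, and dealing with torsion and with the coset issue — I would defer to the end.

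The step I expect to be the main obstacle is the extraction carried out in the last paragraph: coaxing out of the elliptic divisibility sequence an infinite diophantine-over-$\cO_L$ subset of the genuine ring $\cO_K$ with unbounded growth at all archimedean places simultaneously, and feeding it into the vertical method. Everything upstream — the rank reduction to $[N]E(L)\subseteq E(K)$ and the formal-group computation of $v_{\mathfrak p}(x_n)$ — is essentially formal; it is the interplay between $\mathfrak p$-adic divisibility, which the curve sees directly, and archimedean growth, which the vertical method requires, that demands real care.
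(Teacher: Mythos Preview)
The paper does not supply a proof of this statement at all: it is quoted as Shlapentokh's theorem and simply cited to \cite{shlapentokh2008elliptic}, so there is nothing in the paper to compare your argument against. Your proposal, by contrast, sketches the actual content of Shlapentokh's proof, and the outline is broadly faithful to her strategy: the reduction $\op{rank}E(L)=\op{rank}E(K)\Rightarrow [N]E(L)\subseteq E(K)$ for some $N$, the elliptic divisibility/formal-group analysis of the coordinates of multiples of a non-torsion $K$-point, and the weak vertical method to promote an infinite diophantine subset of $\cO_K$ to all of $\cO_K$ are exactly the ingredients she uses.

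That said, since you are writing a proof rather than a citation, you should be aware that the last paragraph of your plan is genuinely the hard part and is not quite as you describe it. In Shlapentokh's argument one does not directly produce a subset of $\cO_K$ with unbounded archimedean growth and then invoke the vertical method as a separate black box; rather, the diophantine model of $\Z$ built from the elliptic divisibility sequence is used to impose, for each prime $\mathfrak p$ of $K$, arbitrarily strong $\mathfrak p$-adic integrality bounds on an auxiliary element of $L$, and these ``bounds from below on the order at $\mathfrak p$'' are what the weak vertical method consumes. Your framing in terms of archimedean growth is not wrong in spirit, but if you try to carry it out literally you will find that the elliptic coordinates give you $\mathfrak p$-adic information directly and archimedean information only indirectly, so the bookkeeping is cleaner if you stay on the non-archimedean side throughout. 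For the purposes of the present paper, however, a bare citation suffices.
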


\subsection{Iwasawa theory of elliptic curves}\label{s 2.2}
\par Fix an algebraic closure $\bar{\Q}$ of $\Q$. Let $p$ be an odd prime number and $K$ be a number field. Throughout, $\Z_p$ is the ring of $p$-adic integers. Let $E$ be an elliptic curve defined over $K$ with good ordinary reduction at the primes above $p$. Denote by $\mu_{p^n}$ the group of $p^n$-th roots of unity in $\bar{\Q}$, and set $\mu_{p^\infty}$ to denote the union $\bigcup_n \mu_{p^n}$. We denote by $K(\mu_{p^n})$ (resp. $K(\mu_{p^\infty})$) the cyclotomic extension of $K$ in $\bar{\Q}$ generated by $\mu_{p^n}$ (resp. $\mu_{p^\infty}$). The \emph{cyclotomic $\Z_p$-extension} of $K$ is the unique $\Z_p$-extension of $K$ which is contained in $K(\mu_{p^\infty})$, and is denoted by $K_{\op{cyc}}$. Given a number field extension $\cF$ of $K$, let $\op{Sel}_{p^\infty}(E/\cF)$ denote the $p$-primary Selmer group of $E$ over $\cF$ (see \cite[p.9]{coates2000galois} for the definition).
\par We let $\Sh(E/K)$ denote the Tate-Shafarevich group of $E$ over $K$ and $\Sh(E/K)[p^\infty]$ its $p$-primary part. The $p$-primary Selmer group fits into a short exact sequence
\begin{equation}\label{selmer ses}
    0\rightarrow E(K)\otimes \Q_p/\Z_p\rightarrow \op{Sel}_{p^\infty}(E/K)\rightarrow \Sh(E/K)[p^\infty]\rightarrow 0.
\end{equation}Note that $\op{Sel}_{p^\infty}(E/\cF)$ is a module over $\Z_p[G]$, when $\cF$ is a finite Galois extension of $K$ with Galois group $G=\op{Gal}(\cF/K)$. For $n\geq 0$, let $K_n$ be the unique extension of $K$ in $K_{\op{cyc}}$ such that $\op{Gal}(K_n/K)\simeq \Z/p^n\Z$. The Selmer group of $E$ over $K_{\op{cyc}}$ is taken to be the natural direct limit with respect to restriction maps
\[\op{Sel}_{p^\infty}(E/K_{\op{cyc}}):=\varinjlim_n \op{Sel}_{p^\infty}(E/K_{n}).\] Setting $\Gamma:=\op{Gal}(K_{\op{cyc}}/K)$, note that $\Gamma^{p^n}=\op{Gal}(K_{\op{cyc}}/K_n)$. The Iwasawa algebra is taken to the completed group ring
\[\Lambda(\Gamma):=\varprojlim_n \Z_p[\Gamma/\Gamma^{p^n}].\] Let $\Z_p\llbracket T\rrbracket$ denote the formal power series ring over $\Z_p$ in the variable $T$. Fix a topological generator $\gamma$ of $\Gamma$, and consider the isomorphism $\Lambda(\Gamma)\simeq \Z_p\llbracket T\rrbracket$ sending $\gamma-1$ to the formal variable $T$. The Selmer group $\op{Sel}_{p^\infty}(E/K_{\op{cyc}})$ is naturally a module over the Iwasawa algebra $\Lambda(\Gamma)$. \par Given a module $M$ over $\Lambda(\Gamma)$ let $M^{\vee}$ be its Pontryagin dual $\op{Hom}_{\Z_p}\left(M, \Q_p/\Z_p\right)$. It is conjectured by Mazur that the dual Selmer group $\op{Sel}_{p^\infty}(E/K_{\op{cyc}})^{\vee}$ is a finitely generated and torsion module over $\Lambda$. This is known to be true in the following special cases
\begin{enumerate}
    \item the $p$-primary Selmer group $\op{Sel}_{p^\infty}(E/K)$ (over $K$) is finite (cf. \cite[Theorem 2.8]{coates2000galois}), 
    \item $K$ is an abelian extension of $\Q$. This is a result of Kato (cf. \cite{kato2004p} and \cite[Theorem 2.2]{hachimori1999analogue}). Rubin proved the result for CM elliptic curves.
\end{enumerate}

\par We say that a polynomial in $\Z_p\llbracket T\rrbracket$ is distinguished if it is monic and all its non-leading coefficients are divisible by $p$. A map of $\Z_p\llbracket T\rrbracket$-modules $M_1\rightarrow M_2$ is said to be a \emph{pseudo-isomorphism} if the kernel and cokernel have finite cardinality. One associates a characteristic element and Iwasawa invariants to a finitely generated and torsion module $M$ over $\Lambda(\Gamma)$ as follows. By the structure theorem of finitely generated and torsion $\Z_p\llbracket T\rrbracket$-modules (cf. \cite[Chapter 13]{washington1997introduction}), there is a pseudo-isomorphism 
\[M\longrightarrow \left(\bigoplus_{j=1}^t \frac{\Z_p\llbracket T\rrbracket}{\left(f_j\right)}\right),\] where $f_1, \dots, f_t$ are non-zero elements of $\Z_p\llbracket T\rrbracket$. The characteristic element is the product $\prod_j f_j$ and is denoted by $f_M(T)$. It is well defined up to multiplication by a unit in $\Z_p\llbracket T\rrbracket$. According to the Weierstrass preparation theorem, we may factor $f_M(T)$ as $p^{\mu}P(T)u(T)$, where $\mu\geq 0$, $P(T)$ is a distinguished polynomial and $u(T)$ is a unit in $\Z_p\llbracket T\rrbracket$. The Iwasawa $\mu$ invariant $\mu(M)$ is the quantity $\mu$ that appears in this factorization. The $\lambda$-invariant $\lambda(M)$ is the degree of the polynomial $P(T)$. 

\par Assume that Mazur's conjecture is satisfied for the Selmer group of $E$ over $K^{\op{cyc}}$, i.e., $\op{Sel}_{p^\infty}(E/K^{\op{cyc}})^\vee$ is finitely generated and torsion as a $\Lambda(\Gamma)$. Then, we set $\mu_p(E/K)$ and $\lambda_p(E/K)$ to denote the associated $\mu$ and $\lambda$-invariants for the dual Selmer group $\op{Sel}_{p^\infty}(E/K^{\op{cyc}})^{\vee}$.

\begin{prop}\label{rank <= lambda}
Let $E$ be an elliptic curve over a number field $K$ and let $p$ be an odd prime number. Assume that
\begin{enumerate}
    \item $E$ has good ordinary reduction at all primes $v|p$ of $K$.
    \item The dual Selmer group $\op{Sel}_{p^\infty}(E/K^{\op{cyc}})^{\vee}$ is a finitely generated and torsion module over $\Lambda(\Gamma)$.
\end{enumerate} Then for all $n\geq 0$, we have that $\op{rank} E(K_n)\leq \lambda_p(E/K)$. In particular, $\op{rank} E(K_n)$ is bounded as $n\rightarrow \infty$.
\end{prop}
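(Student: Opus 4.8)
The plan is to control the rank of $E(K_n)$ by comparing it to the corank of the Selmer group over $K_n$, and then to bound that corank in terms of the $\lambda$-invariant using the Iwasawa-theoretic structure of $\op{Sel}_{p^\infty}(E/K_{\op{cyc}})^\vee$. First I would invoke the fundamental short exact sequence \eqref{selmer ses} applied over $K_n$: it gives an injection $E(K_n)\otimes\Q_p/\Z_p \hookrightarrow \op{Sel}_{p^\infty}(E/K_n)$, hence $\op{rank}_\Z E(K_n) \leq \op{corank}_{\Z_p}\op{Sel}_{p^\infty}(E/K_n)$, where the left side equals the $\Z_p$-corank of $E(K_n)\otimes\Q_p/\Z_p$ since $E(K_n)$ is finitely generated by Mordell--Weil.

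Next I would pass to the cyclotomic tower. Since $\op{Sel}_{p^\infty}(E/K_{\op{cyc}}) = \varinjlim_n \op{Sel}_{p^\infty}(E/K_n)$ and, under hypothesis (2), $X := \op{Sel}_{p^\infty}(E/K_{\op{cyc}})^\vee$ is a finitely generated torsion $\Lambda(\Gamma)$-module, the key input is a control-type comparison between $\op{Sel}_{p^\infty}(E/K_n)$ and the $\Gamma^{p^n}$-invariants $\op{Sel}_{p^\infty}(E/K_{\op{cyc}})^{\Gamma^{p^n}}$. Dually, this relates $\op{Sel}_{p^\infty}(E/K_n)^\vee$ to the coinvariants $X_{\Gamma^{p^n}} = X/\omega_n X$, where $\omega_n = (1+T)^{p^n}-1$. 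Hypothesis (1), good ordinary reduction at all $v\mid p$, is exactly what is needed for the relevant Selmer control maps to have finite (indeed bounded) kernel and cokernel, so that $\op{corank}_{\Z_p}\op{Sel}_{p^\infty}(E/K_n) = \op{rank}_{\Z_p} X_{\Gamma^{p^n}}$ up to a bounded error; in fact I only need the inequality $\op{corank}_{\Z_p}\op{Sel}_{p^\infty}(E/K_n) \leq \op{rank}_{\Z_p} X/\omega_n X + O(1)$, but for a clean statement I would aim to show $\op{rank}_{\Z_p} X/\omega_n X \leq \lambda_p(E/K)$ directly and absorb the comparison.

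The purely algebraic heart is then: for a finitely generated torsion $\Z_p\llbracket T\rrbracket$-module $X$ with $\lambda$-invariant $\lambda := \lambda(X)$, the $\Z_p$-rank of $X/\omega_n X$ is at most $\lambda$ for every $n$. Using the pseudo-isomorphism $X \sim \bigoplus_j \Z_p\llbracket T\rrbracket/(f_j)$ with pseudo-null kernel and cokernel (which contribute nothing to $\Z_p$-ranks), it suffices to treat each elementary quotient $\Z_p\llbracket T\rrbracket/(f)$. Writing $f = p^{\mu_f} P_f(T) u(T)$ with $P_f$ distinguished of degree $\lambda_f$, one checks $\Z_p\llbracket T\rrbracket/(f,\omega_n)$ has $\Z_p$-rank $0$ when $\mu_f>0$ (it is killed by a power of $p$) and $\Z_p$-rank at most $\deg\gcd(P_f,\omega_n) \leq \lambda_f$ when $\mu_f = 0$, since $\Z_p\llbracket T\rrbracket/(P_f,\omega_n)$ is a $\Z_p$-module of rank $\deg\gcd(P_f,\omega_n)$ (its free part); summing over $j$ gives the bound $\sum_j \lambda_{f_j} = \lambda$. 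Combining the three steps yields $\op{rank}_\Z E(K_n) \leq \lambda_p(E/K)$ for all $n$, and boundedness is immediate.

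The main obstacle is the control step: one must ensure the natural restriction map $\op{Sel}_{p^\infty}(E/K_n) \to \op{Sel}_{p^\infty}(E/K_{\op{cyc}})^{\Gamma^{p^n}}$ has kernel and cokernel whose $\Z_p$-coranks are zero (bounded would suffice, but one wants the $\lambda$ bound uncorrupted), which is where the good ordinary reduction hypothesis at primes above $p$ is essential — this is Mazur's control theorem, and I would cite it (e.g.\ \cite[Theorem 2.8]{coates2000galois} or Greenberg's work) rather than reprove it. A secondary subtlety is that the pseudo-isomorphism in the structure theorem need not be an isomorphism, so I must argue that pseudo-null (finite) kernels and cokernels do not affect $\Z_p$-ranks of the $\omega_n$-coinvariants — this follows since a finite module and its image under any map contribute $\Z_p$-rank $0$ everywhere.
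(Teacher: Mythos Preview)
Your proposal is correct and follows the same two--step outline as the paper: first bound $\op{rank} E(K_n)$ by $\op{rank}_{\Z_p}\op{Sel}_{p^\infty}(E/K_n)^\vee$ via the exact sequence \eqref{selmer ses}, then bound the latter by $\lambda_p(E/K)$. The only difference is that the paper dispatches the second step in one line by citing \cite[Theorem~1.9]{greenberg1999iwasawa}, whereas you unpack that result by invoking Mazur's control theorem (finite kernel and cokernel of the restriction map, using good ordinary reduction) and then computing $\op{rank}_{\Z_p}(X/\omega_n X)$ via the structure theory of $\Lambda$-modules. Your route is more self-contained; the paper's is shorter.

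One small imprecision: your claim that $\Z_p\llbracket T\rrbracket/(f,\omega_n)$ is killed by a power of $p$ whenever $\mu_f>0$ is false for a general $f$ with mixed Weierstrass factorization (take $f=pT$ and $n=0$, so $\omega_0=T$, giving $\Z_p\llbracket T\rrbracket/(pT,T)\cong\Z_p$). The fix is to use the refined form of the structure theorem in which each $f_j$ is either $p^{m_j}$ or $P_j(T)^{a_j}$ for an irreducible distinguished $P_j$; then the $p$-power summands genuinely contribute $\Z_p$-rank $0$ and the distinguished summands contribute at most their degree, summing to $\lambda$.
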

\begin{proof}
From \eqref{selmer ses} we arrive at the following inequality \[\op{rank} E(K_n)\leq \op{rank}_{\Z_p} \left(\op{Sel}_{p^\infty}(E/K^{n})^{\vee}\right).\]
According to \cite[Theorem 1.9]{greenberg1999iwasawa}, 
\[\op{rank}_{\Z_p} \left(\op{Sel}_{p^\infty}(E/K^{n})^{\vee}\right)\leq \lambda_p(E/K),\] provided $\op{Sel}_{p^\infty}(E/K^{\op{cyc}})^{\vee}$ is finitely generated and torsion as a module over $\Lambda(\Gamma)$. The result follows.
\end{proof}
\section{Hilbert's tenth problem in the cyclotomic $\Z_p$-extension of a number field}\label{s 3}
\subsection{Rank constancy in cyclotomic towers}
\par We shall study the following property in the context of the cyclotomic $\Z_p$-extension of a number field $K$.
\begin{defn}\label{def 3.1}
Let $K$ be a number field and $p$ be a prime number. Let $K_\infty$ be an infinite pro-$p$ extension of $K$. We say that $K$ is integrally diophantine in $K_\infty$ if for all intermediate number fields $L$ such that $K\subseteq L\subseteq K_\infty$, the extension $L/K$ is integrally diophantine.
\end{defn}

We specialize the above notion to $K_\infty=K_{\op{cyc}}$. Thus, $K$ is integrally diophantine in $K_{\op{cyc}}$ precisely when $K_n/K$ is an integrally diophantine extension for all $n\geq 0$. Thus, if $K/\Q$ is integrally diophantine and $K_{\op{cyc}}/K$ is integrally diophantine, then it will follow that $K_n/\Q$ is intergrally diophantine for all $n$. In particular, this shall imply that Hilbert's tenth problem has a negative solution for the ring of integers of all number fields $K_n$ in the infinite cyclotomic tower.
\par We shall use methods from the Iwasawa theory of elliptic curves to derive conditions for a number field $K$ to be integrally diophantine in $K^{\op{cyc}}$.
\begin{prop}
Let $E$ be an elliptic curve over a number field $K$ and let $p$ be an odd prime number. Assume that
\begin{enumerate}
    \item $\op{rank} E(K)>0$,
    \item $E$ has good ordinary reduction at all primes $v|p$ of $K$.
    \item The dual Selmer group $\op{Sel}_{p^\infty}(E/K^{\op{cyc}})^{\vee}$ is a finitely generated and torsion module over $\Lambda(\Gamma)$.
\end{enumerate} Then, there is a value $n_0\geq 0$ such that for all $n\geq n_0$, $K_n/K_{n_0}$ is an integral diophantine extension. Thus, the extension $K_{\op{cyc}}/K_{n_0}$ is integrally diophantine in the sense of Definition \ref{def 3.1}. Therefore if Conjecture \ref{main conjecture DL} is satisfied for $K_{n_0}$, then it is satisfied for $K_n$ for all $n\geq n_0$.
\end{prop}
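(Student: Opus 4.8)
The plan is to combine the uniform bound on Mordell--Weil ranks from Proposition \ref{rank <= lambda} with the monotonicity of ranks in the tower, and then invoke Shlapentokh's criterion (Theorem \ref{shlap thm}). First, note that for every $n$ the inclusion $K_n \subseteq K_{n+1}$ induces an injection $E(K_n) \hookrightarrow E(K_{n+1})$, so $n \mapsto \op{rank} E(K_n)$ is non-decreasing. Hypotheses (2) and (3) are precisely the hypotheses of Proposition \ref{rank <= lambda}, which therefore yields $\op{rank} E(K_n) \leq \lambda_p(E/K)$ for all $n$. A non-decreasing sequence of non-negative integers that is bounded above is eventually constant, so there exists $n_0 \geq 0$ with $\op{rank} E(K_n) = \op{rank} E(K_{n_0})$ for all $n \geq n_0$; by hypothesis (1) together with monotonicity from $K$ up to $K_{n_0}$, this common value is at least $\op{rank} E(K) > 0$.

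Next, for each $n \geq n_0$ I would view $E$ as an elliptic curve over $K_{n_0}$ and apply Theorem \ref{shlap thm} to the extension $K_n/K_{n_0}$: since $\op{rank} E(K_n) = \op{rank} E(K_{n_0}) > 0$, Shlapentokh's hypothesis is met, and hence $K_n/K_{n_0}$ is integrally diophantine. To conclude that $K_{\op{cyc}}/K_{n_0}$ is integrally diophantine in the sense of Definition \ref{def 3.1}, it remains to observe that the intermediate number fields $L$ with $K_{n_0} \subseteq L \subseteq K_{\op{cyc}}$ are exactly the layers $K_n$ with $n \geq n_0$: indeed $\op{Gal}(K_{\op{cyc}}/K_{n_0}) = \Gamma^{p^{n_0}} \cong \Z_p$, whose nontrivial closed subgroups are the $p^m\Z_p$, so by the Galois correspondence the proper intermediate subfields are precisely $K_{n_0+1}, K_{n_0+2}, \dots$. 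Thus every such $L/K_{n_0}$ is integrally diophantine, as required.

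Finally, suppose Conjecture \ref{main conjecture DL} holds for $K_{n_0}$, i.e. $\Z$ is a diophantine subset of $\cO_{K_{n_0}}$. For each $n \geq n_0$ we have just shown that $\cO_{K_{n_0}}$ is a diophantine subset of $\cO_{K_n}$, so by transitivity of the diophantine property (the standard fact recalled in $\S$\ref{s 2.1}) $\Z$ is a diophantine subset of $\cO_{K_n}$, which is Conjecture \ref{main conjecture DL} for $K_n$. I do not expect a genuine obstacle in this argument: its substance is entirely packaged into Proposition \ref{rank <= lambda} (and, through it, Greenberg's control theorem) and Theorem \ref{shlap thm}. The only points that require a little care are arranging that the stabilized rank is positive rather than merely stable --- which is exactly where hypothesis (1) enters --- and verifying that the tower $K_{\op{cyc}}/K_{n_0}$ has no intermediate number fields beyond the $K_n$.
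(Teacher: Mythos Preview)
Your argument is correct and follows the same route as the paper: invoke Proposition~\ref{rank <= lambda} to bound the ranks, stabilize, and apply Theorem~\ref{shlap thm}. You are simply more explicit than the paper about monotonicity of the ranks, about why the stabilized rank is positive, and about identifying the intermediate fields of $K_{\op{cyc}}/K_{n_0}$ with the layers $K_n$.
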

\begin{proof}
According to Proposition \ref{rank <= lambda}, we find that $\op{rank} E(K_n)$ is bounded as a function of $n$. Therefore, there exists $n_0\geq 0$ such that $\op{rank} E(K_n)=\op{rank} E(K_{n_0})$ for all $n\geq n_0$. The result follows from Theorem \ref{shlap thm}.
\end{proof}
\begin{thm}\label{thm 3.3}
Let $p$ be an odd prime and $E$ be an elliptic curve over a number field $K$ with good ordinary reduction at all primes $v|p$. Assume that $\op{Sel}_{p^\infty}(E/K_{\op{cyc}})^\vee$ is finitely generated and torsion as a module over $\Lambda(\Gamma)$ as conjectured by Mazur. Suppose that \[\lambda_p(E/K)=\op{rank} E(K)>0.\] Then, for all $n\geq 0$, $K_n/K$ is a diophantine extension. Therefore if Conjecture \ref{main conjecture DL} is satisfied for $K$, then it is satisfied for $K_n$ for all $n\geq 0$.
\end{thm}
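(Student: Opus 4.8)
The plan is to run a short sandwich argument on Mordell--Weil ranks along the cyclotomic tower, and then feed the resulting rank equality into Shlapentokh's criterion.

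First I would note that since $K\subseteq K_n$, the finitely generated abelian group $E(K)$ sits inside $E(K_n)$, so $\op{rank} E(K)\leq \op{rank} E(K_n)$ for every $n\geq 0$. On the other hand, the standing hypotheses are precisely those of Proposition \ref{rank <= lambda} --- good ordinary reduction at all $v\mid p$, and $\op{Sel}_{p^\infty}(E/K_{\op{cyc}})^{\vee}$ finitely generated and torsion over $\Lambda(\Gamma)$ --- so that proposition gives $\op{rank} E(K_n)\leq \lambda_p(E/K)$ for all $n$. Combining these inequalities with the hypothesis $\lambda_p(E/K)=\op{rank} E(K)$ yields
\[\op{rank} E(K)\;\leq\;\op{rank} E(K_n)\;\leq\;\lambda_p(E/K)\;=\;\op{rank} E(K),\]
hence $\op{rank} E(K_n)=\op{rank} E(K)$ for all $n\geq 0$, and this common value is $>0$ by assumption.

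Next, for each fixed $n$ I would apply Theorem \ref{shlap thm} to the extension $K_n/K$: we have exhibited an elliptic curve $E_{/K}$ with $\op{rank} E(K_n)=\op{rank} E(K)>0$, which is exactly Shlapentokh's hypothesis, so $K_n/K$ is integrally diophantine, i.e.\ $\cO_K$ is a diophantine subset of $\cO_{K_n}$. Finally, for the assertion about Conjecture \ref{main conjecture DL}: if $\Z$ is diophantine in $\cO_K$ and $\cO_K$ is diophantine in $\cO_{K_n}$, then $\Z$ is diophantine in $\cO_{K_n}$, by the transitivity of integrally diophantine extensions recalled in $\S$\ref{s 2.1} applied to $\Z=\cO_{\Q}\subseteq \cO_K\subseteq \cO_{K_n}$. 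Thus Conjecture \ref{main conjecture DL} for $K$ implies the same statement for every $K_n$.

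There is no genuinely hard step here: the only point requiring care is that Proposition \ref{rank <= lambda} takes Mazur's conjecture as an input (hence its appearance among the hypotheses rather than as something to be proved), and that it is the \emph{equality} $\lambda_p(E/K)=\op{rank} E(K)$, not merely an inequality, that pins the rank down exactly and forces rank constancy at \emph{every} layer of the tower --- strengthening the preceding proposition, which only produced some $n_0$ beyond which the rank stabilises.
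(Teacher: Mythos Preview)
Your proof is correct and follows essentially the same route as the paper's: both invoke Proposition \ref{rank <= lambda} for the upper bound $\op{rank} E(K_n)\leq \lambda_p(E/K)$, use the equality $\lambda_p(E/K)=\op{rank} E(K)$ together with the obvious inclusion $E(K)\subseteq E(K_n)$ to force $\op{rank} E(K_n)=\op{rank} E(K)>0$, and then apply Theorem \ref{shlap thm} and transitivity. If anything, your write-up makes the sandwich inequality more explicit than the paper's terse version.
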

\begin{proof}
Suppose $K_n/K$ is an integral diophantine extension and $K/\Q$ is an integral diophantine extension. Then, $K_n/\Q$ is an integral diophantine extension and thus Hilbert's tenth problem is negative for $\mathcal{O}_{K_n}$. We show that the above hypotheses imply that $K_n/K$ is an integral diophantine extension for all $n\geq 0$.
\par According to Proposition \ref{rank <= lambda}, we have that $\op{rank} E(K_n)\leq \lambda_p(E/K)$. Since it is assumed that $\op{rank} E(K)\leq \lambda_p(E/K)>0$, it follows that 
\[\op{rank} E(K_n)=\op{rank} E(K)>0.\] Therefore by Theorem \ref{shlap thm}, $K_n/K$ is an integral diophantine extension for all $n\geq 0$.
\end{proof}
\subsection{An Euler characteristic formula}
\par Let $E$ be an elliptic curve over a number field $K$ and let $p$ be a prime number. Assume that $E$ has good ordinary reduction at all primes of $K$ that lie above $p$. Assume that the dual Selmer group $\op{Sel}_{p^\infty}(E/K_{\op{cyc}})^{\vee}$ is a finitely generated torsion $\Lambda(\Gamma)$-module and denote by $f_{E,p}(T)$ its characteristic element. Note that $f_{E,p}(T)$ is well defined up to multiplication by a unit in $\Lambda(\Gamma)$. We may express $f_{E,p}(T)$ as a formal power series in $T$, as follows
\[f_{E,p}(T)=\sum_{i=r}^{\infty}a_i T^i,\] where $a_r\neq 0$. The quantity $r\geq 0$ is the order of vanishing of $f_{E,p}(T)$ at zero, thus we may denote it by $\op{ord}_{T=0} f_{E,p}(T)$. We call $a_r$ the leading coefficient. Perrin-Riou \cite{perrin1994theorie} and Schneider \cite{schneider1985p} provide an explicit formula for the leading coefficient $a_r$. Note that $a_r$ is well defined up to a unit in $\Z_p$. Given two elements $a,b\in \Q_p$, we write $a\sim b$ to mean that $a=ub$ for a unit $u\in \Z_p^\times$. The $p$-adic regulator $\mathcal{R}_p(E/K)$ is the determinant of the $p$-adic height pairing on the Mordell-Weil group of $E$. The reader is referred to \cite{ schneider1982p,schneider1985p, mazur1986p} for further details. Schneider has conjectured that the $p$-adic regulator $\mathcal{R}_p(E/K)$ is always non-zero, i.e., the $p$-adic height pairing is always non-degenerate. We set $R_p(E/K)$ to be the normalized $p$-adic regulator, defined by
$R_p(E/K):=p^{-\op{rank}E(K)}\mathcal{R}_p(E/K)$. At each prime $v$ of $K$, let $c_v(E/K)$ be the Tamagawa number of $E$ at $v$ (cf. \cite[Chapter 3]{coates2000galois} for the definition). If $v$ is a prime at which $E$ has good reduction, then $c_v(E/K)=1$. At any prime $v$ of $K$, let $\F_v$ denote the residue field of $\mathcal{O}_v$ at $v$. For each prime $v|p$, let $\widetilde{E}$ be the reduction of $E$ to an elliptic curve over $\F_v$.
\begin{thm}[Perrin Riou, Schneider]\label{schneider thm}
Let $E$ be an elliptic curve over a number field $K$ and $p$ an odd prime. Assume that the following conditions are satisfied
\begin{enumerate}
    \item $E$ has good ordinary reduction at all primes $v|p$ of $K$,
    \item $\Sh(E/K)[p^\infty]$ is finite,
    \item $\mathcal{R}_p(E/K)$ is non-zero,
    \item $\op{Sel}_{p^\infty}(E/K_{\op{cyc}})^\vee$ is finitely generated and torsion as a $\Lambda(\Gamma)$-module.
\end{enumerate}
Then, the following assertions hold:
\begin{enumerate}
    \item $r:=\op{ord}_{T=0} f_{E,p}(T)=\op{rank} E(K)$,
    \item the leading coefficient is given up to a unit by \[a_r\sim\frac{R_p(E/K)\times \#\Sh(E/K)[p^\infty]\times \prod c_v(E/K)\times \left(\prod_{v|p}  \#\widetilde E(\F_v)[p^\infty]\right)^2}{\left(\# E(K)[p^\infty]\right)^2}.\]
\end{enumerate}
\end{thm}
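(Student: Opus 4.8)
This theorem is the algebraic form of the $p$-adic Birch--Swinnerton-Dyer formula for $E/K$, and the plan is to prove it, following Schneider \cite{schneider1985p} and Perrin-Riou \cite{perrin1994theorie}, by computing a generalized $\Gamma$-Euler characteristic of $\op{Sel}_{p^\infty}(E/K_{\op{cyc}})$. Write $X:=\op{Sel}_{p^\infty}(E/K_{\op{cyc}})^{\vee}$; by hypothesis (4) this is a finitely generated torsion $\Lambda(\Gamma)$-module with characteristic element $f_{E,p}(T)$, and under $\Lambda(\Gamma)\simeq\Z_p\llbracket T\rrbracket$ the operator $T=\gamma-1$ acts on $X$. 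The first step is to control the $\Gamma$-invariants $X^{\Gamma}$ and the $\Gamma$-coinvariants $X_{\Gamma}$. Since $E$ has good ordinary reduction above $p$, Mazur's control theorem gives that the restriction map $\op{Sel}_{p^\infty}(E/K)\to\op{Sel}_{p^\infty}(E/K_{\op{cyc}})^{\Gamma}$ has finite kernel and cokernel. Dualizing identifies $X_{\Gamma}$ with $\op{Sel}_{p^\infty}(E/K)^{\vee}$ up to finite groups, so that \eqref{selmer ses} together with hypothesis (2) (finiteness of $\Sh(E/K)[p^\infty]$) yields $\op{rank}_{\Z_p}X_{\Gamma}=\op{rank}E(K)=:g$. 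A standard structure-theoretic fact for finitely generated torsion $\Lambda(\Gamma)$-modules (cf. \cite{greenberg1999iwasawa}) then also gives $\op{rank}_{\Z_p}X^{\Gamma}=g$.

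For assertion (1) I would use the module-theoretic identity $\op{ord}_{T=0}f_{E,p}(T)=\op{rank}_{\Z_p}X^{\Gamma}+\op{rank}_{\Z_p}\op{coker}(\beta)$, where $\beta\colon X^{\Gamma}\to X_{\Gamma}$ is the Bockstein homomorphism induced by multiplication by $T$. Thus $\op{ord}_{T=0}f_{E,p}(T)=g$ precisely when $\beta$ is injective modulo $\Z_p$-torsion, equivalently an isomorphism after $\otimes_{\Z_p}\Q_p$. This is where hypothesis (3) is decisive: the analysis of universal norms along the cyclotomic $\Z_p$-extension in \cite{schneider1985p} and \cite{perrin1994theorie} identifies $\beta$, up to finite kernel and cokernel, with the map on $E(K)\otimes\Q_p/\Z_p$ (together with its companion on $\Sh(E/K)[p^\infty]$) induced by the $p$-adic height pairing. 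Non-degeneracy of that pairing, $\mathcal{R}_p(E/K)\neq 0$, forces $\beta$ to be an isomorphism modulo torsion, giving $r=g=\op{rank}E(K)$; moreover, on free quotients $\det(\beta)$ equals, up to a unit in $\Z_p$, the normalized regulator $R_p(E/K)$, the normalization $R_p(E/K)=p^{-\op{rank}E(K)}\mathcal{R}_p(E/K)$ reflecting that the cyclotomic $p$-adic height is $p\Z_p$-valued.

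For assertion (2) the same module-theoretic lemma shows that, once $\beta$ is an isomorphism modulo torsion, the leading coefficient $a_r$ is, up to a unit in $\Z_p$, the product of $\det(\beta)$ with the ratio of the orders of the $\Z_p$-torsion subgroups of $X_{\Gamma}$ and of $X^{\Gamma}$. I would evaluate that ratio by running the Poitou--Tate exact sequence for $E[p^\infty]$ over $K_{\op{cyc}}$, taking $\Gamma$-cohomology, and applying the snake lemma together with local Tate duality and the local Euler--Poincar\'e characteristic formula for $H^{\ast}(K_v,E[p^\infty])$. The bad local terms at $v\nmid p$ contribute $\prod_v c_v(E/K)$ (trivially at the good primes, where $c_v=1$); each $v\mid p$ contributes $\bigl(\#\widetilde E(\F_v)[p^\infty]\bigr)^2$, the square coming from the self-duality of the ordinary local condition under Tate duality; the global $H^0$-terms contribute $\bigl(\#E(K)[p^\infty]\bigr)^{-2}$; and the exact sequence \eqref{selmer ses} contributes the finite factor $\#\Sh(E/K)[p^\infty]$. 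Multiplying these together with $\det(\beta)\sim R_p(E/K)$ produces the displayed formula.

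The main obstacle is the identification invoked in the second paragraph: matching the algebraically defined Bockstein $\beta$ on the Iwasawa module $X$ with the arithmetically defined $p$-adic height pairing on $E(K)$. This is the technical core of the theorem and occupies much of \cite{schneider1985p} and \cite{perrin1994theorie}; by comparison, the control-theoretic input of the first paragraph is routine, and the Euler-characteristic bookkeeping of the third paragraph, while lengthy, is standard.
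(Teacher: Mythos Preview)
Your proposal is a faithful and essentially correct sketch of the Schneider--Perrin-Riou argument: the control theorem to identify $X_\Gamma$ with $\op{Sel}_{p^\infty}(E/K)^\vee$ up to finite groups, the Bockstein map $\beta$ and its identification with the $p$-adic height pairing, and the local Euler-characteristic bookkeeping that produces the Tamagawa, $\widetilde{E}(\F_v)[p^\infty]$, torsion, and $\Sh$ factors. You have also correctly flagged the hard step, namely the comparison of $\beta$ with the $p$-adic height.

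However, the paper does not reprove this result at all. Its entire proof consists of the single sentence ``The above result is \cite[Theorem 2', p.342]{schneider1985p}.'' In other words, the theorem is quoted from the literature rather than established in the paper. So your proposal is not wrong, but it is vastly more than what the paper does: you are outlining the contents of the cited reference, whereas the paper simply invokes it as a black box. For the purposes of matching the paper, a one-line citation to Schneider would suffice.
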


\begin{proof}
The above result is \cite[Theorem 2', p.342]{schneider1985p}.
\end{proof}

\begin{lem}\label{lemma rank=lambda}
Let $E$ be an elliptic curve satisfying the conditions of Theorem \ref{schneider thm} and set $r:=\op{ord}_{T=0}f_{E,p}(T)=\op{rank} E(K)$. The following conditions are equivalent
\begin{enumerate}
    \item $a_r$ is a unit in $\Z_p$,
    \item $\mu_p(E/K)=0$ and $\lambda_p(E/K)=\op{rank} E(K)$.
\end{enumerate}
\end{lem}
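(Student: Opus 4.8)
The plan is to translate both conditions into statements about a single Weierstrass factorization of the characteristic element. Write $f_{E,p}(T)=p^{\mu}P(T)u(T)$ with $\mu=\mu_p(E/K)$, with $P(T)$ distinguished of degree $\lambda=\lambda_p(E/K)$, and with $u(T)\in\Z_p\llbracket T\rrbracket^{\times}$. This is legitimate because $f_{E,p}(T)$ is only defined up to a unit of $\Z_p\llbracket T\rrbracket$, and replacing $f_{E,p}(T)$ by a unit multiple leaves $P(T)$, $\mu$ and $\lambda$ unchanged (by uniqueness in Weierstrass preparation) while scaling the leading coefficient $a_r$ by an element of $\Z_p^{\times}$, so that its property of being a $p$-adic unit is well defined; I would dispose of this well-definedness remark at the outset. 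Since $u(0)\in\Z_p^{\times}$, comparing orders of vanishing at $T=0$ on both sides of the identity gives $r=\op{ord}_{T=0}P(T)$.

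First I would unwind what it means for $P(T)$ to be distinguished. Writing $P(T)=c_s T^{s}+c_{s+1}T^{s+1}+\dots+T^{\lambda}$ with $s=r$ and $c_s\neq 0$, the coefficients $c_0,\dots,c_{\lambda-1}$ are divisible by $p$ while the leading coefficient is $1$; hence $c_s$ is a unit in $\Z_p$ precisely when $s=\lambda$, and otherwise $p\mid c_s$. In other words, $c_s\in\Z_p^{\times}$ if and only if $r=\lambda_p(E/K)$, which by assertion (1) of Theorem \ref{schneider thm} is the same as the equality $\lambda_p(E/K)=\op{rank} E(K)$.

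Next I would read off the leading coefficient directly. Extracting the coefficient of the lowest power $T^{r}$ on the right-hand side of $f_{E,p}(T)=p^{\mu}P(T)u(T)$ yields $a_r=p^{\mu}\,c_s\,u(0)$. Since $u(0)$ is already a unit, $a_r\in\Z_p^{\times}$ if and only if $\mu=0$ and $c_s\in\Z_p^{\times}$, and by the previous paragraph this happens exactly when $\mu_p(E/K)=0$ and $\lambda_p(E/K)=\op{rank} E(K)$. That is the asserted equivalence, and since the argument is a chain of ``if and only if'' statements, no converse needs separate treatment.

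As for the main obstacle: there really is none of substance — the whole argument is bookkeeping with the Weierstrass preparation theorem and the definition of a distinguished polynomial, and the only point demanding a little care is the well-definedness of ``$a_r$ is a $p$-adic unit'' under a change of generator of the characteristic ideal. Note also that Theorem \ref{schneider thm} enters only through its first assertion $r=\op{rank} E(K)$; the explicit formula for $a_r$ in its second assertion is not needed here, though it is of course what one would substitute afterwards to extract arithmetic consequences of condition (1).
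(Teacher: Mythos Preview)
Your proof is correct and follows essentially the same approach as the paper's: both arguments reduce the equivalence to bookkeeping with the Weierstrass preparation theorem. The paper organizes it as two separate implications via the factorization $f_{E,p}(T)=T^{r}g(T)$, while you start from the Weierstrass factorization $p^{\mu}P(T)u(T)$ and read off $a_r=p^{\mu}c_r u(0)$ in one step; these are minor variants of the same idea.
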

\begin{proof}
We write $f_{E,p}(T)$ as $T^rg(T)$ where $g(0)=a_r\neq 0$. Suppose that $a_r$ is a unit in $\Z_p$. Then, $g(T)$ is a unit in $\Lambda(\Gamma)$ and the Weierstrass factorization of $f_{E,p}(T)$ is given by $P(T)u(T)$, where $P(T)=T^r$ is a distinguished polynomial and $u(T)=g(T)$ is a unit in $\Lambda(\Gamma)$. Thus, $\lambda_p(E/K)=\op{deg}P(T)=\op{deg}T^r=r$, and $\mu_p(E/K)=0$.
\par On the other hand, suppose that $\mu_p(E/K)=0$ and $\lambda_p(E/K)=r$. Then, we may write $f_{E,p}(T)=P(T)u(T)$, where $P(T)$ is a distinguished polynomial of degree $r$ and $u(T)$ is a unit in $\Lambda(\Gamma)$. Since $P(T)$ is divisible by $T^r$, this forces $P(T)=T^r$ and $u(T)=g(T)$. Therefore, $g(T)$ is a unit in $\Lambda(\Gamma)$, and hence $a_r=g(0)$ is unit in $\Z_p$.
\end{proof}
\begin{thm}\label{main}
Let $E$ be an elliptic curve over a number field $K$ and $p$ an odd prime. Assume that the following conditions are satisfied
\begin{enumerate}
    \item $E$ has good ordinary reduction at all primes $v|p$ of $K$,
    \item $\op{rank} E(K)>0$,
    \item $\op{Sel}_{p^\infty}(E/K_{\op{cyc}})^\vee$ is finitely generated and torsion as a $\Lambda(\Gamma)$-module,
    \item $R_p(E/K)$ is a $p$-adic unit (in particular, nonzero), \item $\Sh(E/K)[p^\infty]=0$,
    \item $p\nmid c_v(E/K)$ for all primes $v$ at which $E$ has bad reduction,
    \item $p\nmid \#\widetilde E(\F_v)$ for all primes $v|p$ of $K$.
\end{enumerate}
Then, for all $n\geq 0$, $K_n/K$ is an integrally diophantine extension. Therefore if Conjecture \ref{main conjecture DL} is satisfied for $K$, then it is satisfied for $K_n$ for all $n\geq 0$.
\end{thm}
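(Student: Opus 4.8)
The plan is to reduce everything to Theorem \ref{thm 3.3}: once we know $\lambda_p(E/K)=\op{rank}E(K)>0$, that theorem immediately yields both the conclusion that $K_n/K$ is integrally diophantine for all $n$ and the consequence for Conjecture \ref{main conjecture DL}. So the real task is to verify the single identity $\lambda_p(E/K)=\op{rank}E(K)$ (and, in order to apply Lemma \ref{lemma rank=lambda}, also $\mu_p(E/K)=0$), and for this I would route through the Perrin-Riou--Schneider leading-coefficient formula.

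First I would check that hypotheses (1)--(5) place us in the setting of Theorem \ref{schneider thm}. Condition (1) there is our hypothesis (1); the finiteness of $\Sh(E/K)[p^\infty]$ is immediate from hypothesis (5), which says this group is trivial; the non-vanishing of $\mathcal{R}_p(E/K)$ follows from hypothesis (4) together with the relation $\mathcal{R}_p(E/K)=p^{\op{rank}E(K)}R_p(E/K)$ and hypothesis (2); and the $\Lambda(\Gamma)$-cotorsion hypothesis is our hypothesis (3). Theorem \ref{schneider thm} then gives $r:=\op{ord}_{T=0}f_{E,p}(T)=\op{rank}E(K)$, which is positive by hypothesis (2), together with the explicit formula for the leading coefficient $a_r$.

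Next I would read off from that formula that $a_r$ is a $p$-adic unit. In the numerator of the displayed expression: $R_p(E/K)\in\Z_p^\times$ by hypothesis (4); $\#\Sh(E/K)[p^\infty]=1$ by hypothesis (5); each $c_v(E/K)$ equals $1$ at a prime of good reduction and is prime to $p$ at a prime of bad reduction by hypothesis (6), so $p\nmid\prod_v c_v(E/K)$; and $\#\widetilde E(\F_v)[p^\infty]=1$ for every $v\mid p$ by hypothesis (7). Hence the numerator is a unit, so that $a_r\sim\left(\#E(K)[p^\infty]\right)^{-2}$, which forces $v_p(a_r)\le 0$, where $v_p$ denotes the $p$-adic valuation. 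On the other hand $f_{E,p}(T)\in\Z_p\llbracket T\rrbracket$, so $a_r\in\Z_p$ and $v_p(a_r)\ge 0$; therefore $v_p(a_r)=0$ and $a_r\in\Z_p^\times$ (incidentally this also shows $\#E(K)[p^\infty]=1$, though we shall not need that). Lemma \ref{lemma rank=lambda} now gives $\mu_p(E/K)=0$ and $\lambda_p(E/K)=\op{rank}E(K)$.

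With $\lambda_p(E/K)=\op{rank}E(K)>0$ established, and hypotheses (1) and (3) supplying the remaining assumptions of Theorem \ref{thm 3.3}, that theorem shows $K_n/K$ is integrally diophantine for all $n\ge 0$, and the statement about Conjecture \ref{main conjecture DL} follows from the transitivity of integrally diophantine extensions recorded in $\S$\ref{s 2.1}. (Equivalently, one can bypass Theorem \ref{thm 3.3}: Proposition \ref{rank <= lambda} gives $\op{rank}E(K_n)\le\lambda_p(E/K)=\op{rank}E(K)$, the reverse inequality is trivial since $E(K)\subseteq E(K_n)$, and then Theorem \ref{shlap thm} applies.) Since the proof is essentially an assembly of results already in place, I do not expect a genuine obstacle; the one point that requires a little care is the factor $\left(\#E(K)[p^\infty]\right)^2$ in the denominator of Schneider's formula, which is handled not by a separate bound on torsion but simply by the integrality of the coefficient $a_r$ as above.
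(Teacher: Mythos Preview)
Your proof is correct and follows exactly the same route as the paper: verify the hypotheses of Theorem \ref{schneider thm}, use the leading-coefficient formula together with conditions (4)--(7) to conclude $a_r\in\Z_p^\times$, invoke Lemma \ref{lemma rank=lambda} to get $\lambda_p(E/K)=\op{rank}E(K)$, and then finish via Theorem \ref{thm 3.3}. Your treatment of the denominator $\left(\#E(K)[p^\infty]\right)^2$ via the integrality of $a_r$ is in fact more explicit than the paper, which simply asserts that $a_r$ is a unit.
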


\begin{proof}
Note that since it is assumed that $\Sh(E/K)[p^\infty]=0$, in particular, $\Sh(E/K)[p^\infty]$ is finite. The conditions of Theorem \ref{schneider thm} are satisfied. It follows from Theorem \ref{schneider thm} that $a_r$ is a unit, where $r=\op{rank} E(K)$. Lemma \ref{lemma rank=lambda} then asserts that $\lambda_p(E/K)=\op{rank} E(K)$. Since $\op{rank} E(K)>0$, the result follows from Theorem \ref{thm 3.3}.
\end{proof}

The above result thus shows that given a number field $K$, if there exists an elliptic curve $E_{/K}$ satisfying the conditions of Theorem \ref{main}, then, $K_n/K$ is integrally diophantine for all $n$. In the next section, we shall better explain the conditions of Theorem \ref{main}.
\section{Conditions for rank constancy in cyclotomic $\Z_p$-towers}\label{s 4}
\par Let $E$ be an elliptic curve defined over $\mathbb{\Q}$ such that $\op{rank} E(\Q)>0$, and let $K/\Q$ be a number field extension. We consider the base change of $E$ to $K$. Note that $\op{rank} E(K)>0$. The data $(E,K)$ is fixed throughout and the results are of most interest in the case when $K$ is neither totally real, nor abelian. Assume that $E$ does not have complex multiplication. Given any set of prime numbers $\mathcal{P}$, the upper (resp. lower) density of $\mathcal{P}$ shall refer to its upper (resp. lower) Dirichlet density. When we say that $\mathcal{P}$ has density $\delta\in [0,1]$, we mean that the Dirichlet density of $\mathcal{P}$ exists and equals $\delta$. Note that the upper and lower densities always exist. Let $\Omega$ be the set of odd prime numbers $p$ such that $E$ has good ordinary reduction at $p$ and the conditions of Theorem \ref{main} are satisfied for $E_{/K}$ and the cyclotomic $\Z_p$-extension of $K$. In this section, we provide circumstantial evidence for the following conjecture.
\begin{conj}\label{Omega conj}
The set of primes $\Omega$ has positive lower density.
\end{conj}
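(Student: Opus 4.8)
The plan is to reduce Conjecture~\ref{Omega conj} to the behaviour of the single hypothesis (4) of Theorem~\ref{main}, by checking that each of the other six hypotheses holds, for the fixed pair $(E,K)$, outside a set of primes of density zero. Two are immediate: (2) holds for every $p$ since $\op{rank} E(K)\geq\op{rank} E(\Q)>0$; and (1) holds for a density-one set of primes because $E/\Q$ has no complex multiplication, so by Serre the supersingular primes of $E/\Q$ have density zero, while ``good ordinary reduction at every $v\mid p$'' for $E_{/K}$ is the same condition as ``good ordinary reduction at $p$'' for $E/\Q$, reduction type being unchanged by base change. Hypotheses (5) and (6) are even cofinite: granting the finiteness of $\Sh(E/K)$ (known in many cases, e.g. when $E/K$ has analytic rank at most one via Gross--Zagier and Kolyvagin, and conjectural in general), one has $\Sh(E/K)[p^\infty]=0$ for every $p\nmid\#\Sh(E/K)$, and $E_{/K}$ has bad reduction at only finitely many primes $v$, each with a fixed Tamagawa factor, whence $p\nmid c_v(E/K)$ for all but finitely many $p$.

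Hypothesis (7) is the first requiring an input on the distribution of Frobenius traces. For $v\mid p$ of residue degree $f_v\leq d:=[K:\Q]$, let $\alpha\in\Z_p^\times$ be the unit root of $X^2-a_p(E)X+p$; a short computation gives $\#\widetilde E(\F_v)\equiv 1-\alpha^{f_v}\pmod p$ and $\alpha\equiv a_p(E)\pmod p$, so a prime violating (7) must have $a_p(E)^{f}\equiv 1\pmod p$ for some $f\leq d$, forcing the multiplicative order of $a_p(E)$ modulo $p$ to be at most $d$. Only $O_d(1)$ residues modulo $p$ have order at most $d$, and for $p$ large each has at most one representative in the Hasse interval $[-2\sqrt p,2\sqrt p]$; thus the primes violating (7) lie in a density-zero set --- for $d=1$ this is Serre's estimate $\#\{p\leq x : a_p(E)=1\}=o(\pi(x))$ for the anomalous primes, and for general $d$ a short sieve on $a^{L}-1$, with $L=\mathrm{lcm}(1,\dots,d)$, reduces to the same input. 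For hypothesis (3): when $K/\Q$ is abelian, Kato's theorem supplies the $\Lambda(\Gamma)$-cotorsion of $\op{Sel}_{p^\infty}(E/K_{\op{cyc}})^\vee$ for every odd prime $p$ of good ordinary reduction; for general $K$ this is Mazur's conjecture, which we assume, and which is expected to hold for every such $p$.

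After these reductions, Conjecture~\ref{Omega conj} is equivalent to the assertion that the set of primes $p$ for which the normalized $p$-adic regulator $R_p(E/K)=p^{-\op{rank} E(K)}\mathcal R_p(E/K)$ is a $p$-adic unit has positive lower density, and this is the one step for which we can offer only heuristic rather than (conditionally) rigorous evidence, since even $\mathcal R_p(E/K)\neq 0$ is Schneider's open conjecture on the non-degeneracy of the $p$-adic height pairing. Modeling $R_p(E/K)\in\Z_p$ as a ``random'' $p$-adic integer makes it a unit with ``probability'' $1-1/p$, and since
\[
\frac{1}{\pi(x)}\sum_{p\leq x}\Bigl(1-\tfrac{1}{p}\Bigr)\longrightarrow 1\qquad(x\to\infty),
\]
the heuristic in fact predicts density one for $\Omega$; we state only positive lower density to allow for possible correlations among the hypotheses, and the numerical data on $p$-adic regulators of small elliptic curves (in the spirit of \cite{mazur1986p}) are consistent with the stronger expectation. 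The genuine obstacle to a theorem lies exactly here: one would need an unconditional lower bound on the density of primes $p$ with $v_p(\mathcal R_p(E/K))=\op{rank} E(K)$, and no such bound --- indeed, not even $\mathcal R_p(E/K)\neq 0$ in general --- is presently available for a single elliptic curve.
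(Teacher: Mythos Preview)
The statement is a conjecture, and the paper does not prove it; it offers circumstantial evidence by isolating the single problematic hypothesis~(4) of Theorem~\ref{main} and supporting it with heuristics and numerical data. Your proposal follows the same strategy and is, in that sense, aligned with the paper: both dispose of hypotheses (1), (2), (5), (6) in the same way (Serre's density-one result for ordinary primes, positivity of rank over $\Q$, assumed finiteness of $\Sh(E/K)$, finiteness of the Tamagawa product), treat (3) as an instance of Mazur's conjecture, and then argue heuristically for (4). Your explicit ``random $p$-adic integer'' model for $R_p(E/K)$ is a reasonable articulation of what the paper leaves to computation.

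The one substantive difference is in the handling of~(7). The paper restricts at the outset to primes $p$ that split completely in the Galois closure $\widetilde K$ of $K$; then every $v\mid p$ has $\F_v=\F_p$, so (7) reduces to ``$p$ is not anomalous for $E/\Q$'', which Serre's estimate handles. This costs a factor $[\widetilde K:\Q]^{-1}$ in the density of the resulting set $\Sigma$, but the argument is clean and unconditional. You instead try to show that (7) fails on a density-zero set among \emph{all} ordinary primes. Your congruence $p\mid\#\widetilde E(\F_v)\iff a_p(E)^{f_v}\equiv 1\pmod p$ is correct, as is the count of at most $L=\operatorname{lcm}(1,\dots,d)$ residues of order $\le d$ modulo $p$, each with at most one lift to the Hasse interval. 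But the closing step---``a short sieve on $a^L-1$ reduces to the same input''---is not justified: the bad values of $a_p(E)$ depend on $p$, so one is not in the fixed-trace situation of Serre's theorem, and no sieve of the kind you allude to is visible. (For $f_v\le 2$ the argument does go through, since $a_p(E)^2\equiv 1\pmod p$ with $|a_p(E)|<2\sqrt p$ forces $a_p(E)=\pm 1$; the difficulty begins at $f_v\ge 3$.) The gap is harmless for your purposes---adopting the paper's split-prime restriction repairs it immediately---and in any case does not change the heuristic status of the overall argument, whose real obstruction, as you correctly identify, is the regulator condition~(4).
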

In this section, the prime $p$ is allowed to vary over the prime numbers at which $E$ has good ordinary reduction. A classical result of Serre shows that for a non-CM elliptic curve, the set of primes of good ordinary (resp. supersingular) reduction has density $1$ (resp. $0$). Since the prime $p$ is not fixed in this section, it is pertinent that we do not suppress the role of $p$ in the notation for cyclotomic extension. Thus, we shall set $K_{\op{cyc}}^{(p)}$ to denote the cyclotomic $\Z_p$-extension of $K$ and $K_{n}^{(p)}$ the subfield of $K_{\op{cyc}}^{(p)}$ with $[K_n:K]=p^n$.
\begin{thm}
Let $K$ be a number field and $E$ be a non-CM elliptic curve over $\Q$ such that
\begin{enumerate}
    \item $\op{rank} E(\Q)>0$,
    \item $\Sh(E/K)$ is finite.
\end{enumerate}
Then there exists a positive density set of odd prime numbers $\Sigma$ satisfying the following conditions
\begin{enumerate}
    \item $E$ has good ordinary reduction at all prime numbers $p\in \Sigma$,
    \item if for a prime $p\in \Sigma$, the following conditions are satisfied
    \begin{enumerate}
        \item $R_p(E/K)$ is a $p$-adic unit,
        \item $\op{Sel}_{p^\infty}(E/K_{\op{cyc}})^{\vee}$ is a finitely generated and torsion module over $\Lambda(\Gamma)$,
    \end{enumerate} then, $\op{rank} E(K_n^{(p)})=\op{rank} E(K)$ for all $n$ and the conclusion of Theorem \ref{main} holds.
\end{enumerate}
\end{thm}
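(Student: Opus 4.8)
The plan is to obtain $\Sigma$ as a positive-density splitting condition from which one deletes a density-zero set and finitely many primes. First I would let $\widetilde K/\Q$ be the Galois closure of $K/\Q$ and let $\Sigma_0$ consist of the odd primes $p$ that are coprime to the conductor of $E$, unramified in $\widetilde K$, and split completely in $\widetilde K$; by Chebotarev, $\Sigma_0$ has density $1/[\widetilde K:\Q]>0$. For $p\in\Sigma_0$ the curve $E$ has good reduction at every prime $v\mid p$ of $K$, and, since $p$ splits completely in $K\subseteq\widetilde K$, each such $v$ has residue field $\F_v=\F_p$. Deleting from $\Sigma_0$ the supersingular primes of $E$, a density-zero set by Serre's theorem that a non-CM curve over $\Q$ is ordinary at a density-one set of primes, leaves a set of the same density consisting of primes of good ordinary reduction; this is condition (1) of the statement.

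The main point is to force condition (7) of Theorem \ref{main}, namely $p\nmid\#\widetilde E(\F_v)$ for $v\mid p$. Since $\widetilde E_{/\F_v}=\widetilde E_{/\F_p}$ we have $\#\widetilde E(\F_v)=p+1-a_p$ with $a_p=a_p(E)$, and the Hasse bound $|a_p|\le 2\sqrt p$ shows that for $p\ge 7$ the divisibility $p\mid\#\widetilde E(\F_v)$ is equivalent to $a_p=1$. I would therefore further delete from $\Sigma_0$ the set $\{p:a_p(E)=1\}$, which has density zero for non-CM $E$ by Serre's estimates for $\#\{p\le x:a_p(E)=t\}$ (or by Sato--Tate). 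Finally I delete the finitely many primes dividing $\#\Sh(E/K)$ (finite by hypothesis (2)), the finitely many primes dividing some Tamagawa number $c_v(E/K)$, and the finitely many primes $p<7$; the condition $p\nmid\#E(K)_{\mathrm{tors}}$ then comes for free, since for $p\in\Sigma_0$ unramified in $K$ reduction modulo $v$ embeds $E(K)[p^\infty]$ into $\widetilde E(\F_v)[p^\infty]$. Call the surviving set $\Sigma$; it still has density $1/[\widetilde K:\Q]>0$.

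It remains to check that for $p\in\Sigma$ satisfying (a) and (b) all seven hypotheses of Theorem \ref{main} are met: (1) good ordinary reduction at all $v\mid p$, by construction of $\Sigma$; (2) $\op{rank}E(K)\ge\op{rank}E(\Q)>0$, by hypothesis (1) of the present theorem; (3) and (4) are precisely (b) and (a); (5) $\Sh(E/K)[p^\infty]=0$, since $p\nmid\#\Sh(E/K)$; (6) $p\nmid c_v(E/K)$, by construction; (7) $p\nmid\#\widetilde E(\F_v)$ for $v\mid p$, since $\F_v=\F_p$ and $a_p\ne 1$. Theorem \ref{main} then yields that $K_n/K$ is integrally diophantine for all $n$ and the descent of Conjecture \ref{main conjecture DL}; the rank equality $\op{rank}E(K_n^{(p)})=\op{rank}E(K)$ is extracted from the proof of Theorem \ref{main}, where Proposition \ref{rank <= lambda} and Theorem \ref{thm 3.3} give $\op{rank}E(K_n^{(p)})\le\lambda_p(E/K)=\op{rank}E(K)$, while $E(K)\subseteq E(K_n^{(p)})$ gives the opposite inequality.

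The only substantial input beyond bookkeeping is the density-zero statement for $\{p:a_p(E)=1\}$; everything else reduces to Chebotarev, Serre's density theorem for ordinary primes, and the finiteness of $\Sh(E/K)$, of the Tamagawa numbers, and of $E(K)_{\mathrm{tors}}$. The step I would take most care over is the reduction of condition (7) to ``$a_p\ne1$'': this is exactly where the hypothesis that $p$ splits completely in $\widetilde K$ is used essentially, for otherwise a prime $v\mid p$ may have residue degree $f_v>1$ and condition (7) becomes $a_p^{f_v}\not\equiv 1\pmod p$, which is not readily controlled uniformly in $p$. Restricting to completely split primes both removes this difficulty and explains why the argument produces only positive --- not full --- density.
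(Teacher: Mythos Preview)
Your proposal is correct and follows essentially the same route as the paper: take the primes completely split in the Galois closure $\widetilde K$ (Chebotarev gives density $[\widetilde K:\Q]^{-1}$), intersect with the density-one set of ordinary primes (Serre), then remove the density-zero set of anomalous primes and the finitely many primes dividing $\#\Sh(E/K)$ or some Tamagawa number, and verify the hypotheses of Theorem~\ref{main}. Your write-up is in fact somewhat more explicit than the paper's --- you spell out the Hasse-bound reduction of condition~(7) to $a_p\neq 1$, explain why complete splitting is essential there, and unpack the rank-constancy conclusion via Proposition~\ref{rank <= lambda} --- but the argument is the same.
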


\begin{proof}
By the aforementioned result of Serre, the set of primes $p$ at which $E$ has good ordinary reduction has density $1$. Let $\widetilde{K}\subset \bar{\Q}$ be the Galois closure of $K$. By a standard application of the Chebotarev density theorem, the set of prime numbers $p$ that split completely in $\widetilde{K}$ has density $[\widetilde{K}:\Q]^{-1}$. Let $\Sigma_0$ be the set of primes $p$ at which $E$ has good ordinary reduction that are completely split in $K$. We find that $\Sigma_0$ has density $[\widetilde{K}:\Q]^{-1}$.
\par Let the set of primes $p\in \Sigma_0$ such that $p$ divides $\#\widetilde{E}(\F_p)$ be denoted by $\Sigma_1$. Primes $p$ at which $E$ has good reduction such that $p$ divides $\#\widetilde{E}(\F_p)$ are known as \emph{anomalous} primes. If $p$ is large enough, then a prime $p$ is anomalous precisely when $a_p(E):=p+1-\#\widetilde{E}(\F_p)$ is equal to $1$. As is well known, the set of anomalous primes has density $0$ (cf. \cite{murty1997modular}). It follows that $\Sigma_1$ has density $0$. Note that since any prime $p\in \Sigma_0\backslash\Sigma_1$ is completely split in $\widetilde{K}$, for each prime $v$ of $K$ that lies above $p$, we find that $\F_v=\F_p$. Therefore, since $p$ is not anomalous, we find that $p$ does not divide the product $\prod_{v\mid p} \# \widetilde{E}(\F_v)$ for all primes $p\in \Sigma_0\backslash\Sigma_1$. The set of primes that divide any given natural number is clearly finite. Therefore, the set of primes $p\in \Sigma_0$ such that $p\mid \prod_v c_v(E/K)$ is finite. Denote this set by $\Sigma_2$. Finally, note that since it is assumed that $\Sh(E/K)$ is finite, the set of primes $p$ such that $\Sh(E/K)[p^\infty]\neq 0$ is finite as well. Let $\Sigma_3$ denote this set. Set $\Sigma$ to be the set of primes $\Sigma_0\backslash \left(\bigcup_{i=1}^3 \Sigma_i\right)$. We find that for $p\in \Sigma$, if $R_p(E/K)$ is a unit in $\Z_p$, then, the conditions of Theorem \ref{main} are satisfied and the result follows.
\end{proof}

One would like to understand how often $R_p(E/K)$ is \emph{not} a $p$-adic unit. Unfortunately, the $p$-adic regulator is a fairly complex invariant computations involving the $p$-adic regulator over a number field $K$ are challenging.

\begin{rem}There are built in packages on the Sage computational system \cite{stein2008sage} to compute $p$-adic regulators over $\Q$, however, the author is not aware of any such packages written for number fields $K\neq \Q$ that are readily usable.
\end{rem}Computations over $\Q$ suggest the following conjecture over other number fields.
\begin{conj}
Let $E$ be an elliptic curve over $\mathbb{Q}$ and $K$ be a number field. The set of primes $p$ such that $R_p(E/K)$ is divisible by $p$ has density $0$.
\end{conj}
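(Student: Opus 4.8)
The plan is to reduce the question to the $p$-adic valuation of a determinant built from $p$-adic heights of a \emph{fixed} basis, and then to bound the exceptional set by a probabilistic heuristic that one would hope to make rigorous via equidistribution. First I would fix a $\Z$-basis $P_1,\dots,P_r$ of the free part of $E(K)$, with $r=\op{rank}E(K)$, and use $\mathcal{R}_p(E/K)=\det\big(h_p(P_i,P_j)\big)_{1\le i,j\le r}$ for the cyclotomic $p$-adic height pairing $h_p$. Over the density-one set of primes $p$ that are unramified in $K$, split completely in the Galois closure $\widetilde{K}$, have good ordinary non-anomalous reduction, and are coprime to every Tamagawa factor, reduction carries each $P_i$ into $\widetilde{E}(\F_p)$, and multiplication by $m_p:=\op{lcm}_{v\mid p}\#\widetilde{E}(\F_v)$ --- prime to $p$ --- places $m_pP_i$ in the formal group at every $v\mid p$. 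From $h_p(m_pP_i,m_pP_j)=m_p^2\,h_p(P_i,P_j)$ and the explicit formulas of Schneider and Perrin-Riou for the local components of $h_p$ at $v\mid p$ in terms of the $p$-adic sigma function (Mazur--Tate) and the formal group logarithm, one finds $h_p(P_i,P_j)\in p\Z_p$, say $h_p(P_i,P_j)=p\,u_{ij}(p)$; then $R_p(E/K)=\det\big(u_{ij}(p)\big)$, the factor $p^{-r}$ in the definition of $R_p$ having absorbed the automatic $p^{r}$. The conjecture thus asks that the matrix $\big(u_{ij}(p)\big)$ be invertible modulo $p$ for all but a density-zero set of $p$.

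Second, I would model the reduction of $\big(u_{ij}(p)\big)$ modulo $p$ as an equidistributed sequence of $r\times r$ matrices over $\F_p$, so that $p\mid R_p(E/K)$ --- singularity of this matrix --- has ``probability'' $1/p+O(p^{-2})$; since $\sum_{p\le x}1/p=\log\log x+O(1)=o(\pi(x))$, density zero follows at the heuristic level. To make this rigorous one needs an honest equidistribution statement. Unwinding the explicit formula, the entries $u_{ij}(p)\bmod p$ are controlled by the classes, modulo a bounded power of $p$, of the formal logarithms $\log_{\widehat{E}}(m_pP_i)$ at the $v\mid p$ (and by the local contributions at the primes $\ell\ne p$), so the condition $p\mid\det\big(u_{ij}(p)\big)$ translates into the fixed global points $P_1,\dots,P_r$ satisfying a congruence, which one would encode as a prescribed splitting behaviour of $p$ in a tower of number fields generated by torsion and division points of $E$. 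Chebotarev then gives the expected $O(1/p)$ density at each finite level, and the remaining task is to sum these contributions --- the subtlety being that the relevant field grows with $p$ --- with enough uniformity to beat $\pi(x)$.

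The genuine obstacle is this last point, together with the fact that the reduction to a purely Galois-theoretic condition is only approximate: the local components of $h_p$ involve, besides the formal logarithm, the $p$-adic sigma function and $\log_p$ of the local data at primes $\ell\ne p$, none of which is the reduction of a single global algebraic invariant, so the $p$-divisibility of $R_p(E/K)$ is intrinsically $p$-adic-analytic and no unconditional control on the joint behaviour of the $R_p(E/K)$ as $p$ varies is presently available. Even for $r=1$ and $K=\Q$, where Mazur--Tate--Teitelbaum give a closed formula for $h_p(P)$, it is open whether $R_p(E/\Q)$ is a $p$-adic unit for a density-one set of $p$; the ``independence'' implicit in the heuristic is exactly what cannot currently be justified. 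A complete proof would seem to require either a $p$-adic-analytic input --- a non-vanishing-with-control statement for leading terms of $p$-adic $L$-functions, via the $p$-adic Birch--Swinnerton-Dyer formula relating the leading coefficient $a_r$ to $R_p(E/K)$ in Theorem \ref{schneider thm} --- or an effective Chebotarev bound uniform over the division-point tower, and in the absence of either I would present the above as a heuristic derivation of the predicted density, consistent with the computations over $\Q$, rather than a theorem.
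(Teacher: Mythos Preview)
The statement you are attempting to prove is labeled a \emph{Conjecture} in the paper, and the paper offers no proof whatsoever --- only computational evidence over $\Q$ (the table of $\Pi^{\leq 1000}$ for rank-$2$ curves) together with the remark that ``the data indicates that $p\mid R_p(E/K)$ should be a rare occurrence.'' There is therefore no proof in the paper to compare your proposal against.

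Your write-up is candid about its own status: you yourself conclude that what you have is ``a heuristic derivation of the predicted density \dots\ rather than a theorem.'' That self-assessment is correct, and the obstacles you identify --- the intrinsically $p$-adic-analytic nature of the height pairing, the lack of a single global algebraic object whose reduction mod $p$ governs $R_p(E/K)\bmod p$, and the need for Chebotarev-type uniformity over a tower that grows with $p$ --- are exactly why this remains open even for $K=\Q$ and $r=1$. The probabilistic model (random $r\times r$ matrices over $\F_p$, singularity with probability $\sim 1/p$, Borel--Cantelli-style summation) is a standard and reasonable heuristic, and your reduction of the question to the invertibility of $(u_{ij}(p))\bmod p$ is a clean reformulation. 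But none of this constitutes a proof, and presenting it under a \texttt{proof} environment is misleading: at best it is motivation for the conjecture, of the same nature as (though more detailed than) what the paper itself offers.

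In short: there is no gap to fix because there is no argument that could be completed with current tools; the conjecture is genuinely open, the paper treats it as such, and your proposal should be reframed as a heuristic discussion rather than a proof attempt.
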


Given an elliptic curve $E_{/\Q}$ and $N>0$, let $\Pi^{\leq N}$ be the set of primes $p\leq N$ of good ordinary reduction such that $p\mid R_p(E/\Q)$. We compute $\Pi^{\leq 1000}$ for the first ten elliptic curves of rank $2$ ordered by conductor.

The calculations in the following table are from \cite[p.7955]{kundu2021statistics} and were done on Sage.
\vspace{1.0cm}
\begin{center}
\begin{tabular}{c|c|c |c|c|c} 
 & Cremona Label & $\Pi^{\leq 1000}$ & & Cremona Label & $\Pi^{\leq 1000}$\\ [1 ex]
 \hline
 $1.$ & $389a$ & $\emptyset$ & $6.$ & $643a$ & $\emptyset$\\
 $2.$ & $433a$ & $\{13\}$& $7.$ & $655a$ & $\{7,31\}$\\
 $3.$ & $446d$ & $\{7\}$&$8.$ & $664a$ & $\{59\}$\\
 $4.$ & $563a$ & $\emptyset$&$9.$ & $681c$ & $\emptyset$ \\
 $5.$ & $571b$ & $\emptyset$&$10.$ & $707a$& \{29\}.\\
\end{tabular}
\end{center}
\vspace{1cm}

The data indicates that the $p\mid R_p(E/K)$ should be a rare occurrence.

\par We give one concrete example of an elliptic curve $E$ over $\Q$ for which Theorem \ref{main} can be expected to apply. We do not obtain any new result by working over $\mathbb{Q}$, the calculation below is only included to illustrate our technique.
\par Consider the elliptic curve $E$ with Weierstrass equation $y^2+y=x^3-x$, Cremona label 37a1, and set $p=5$. We discuss the conditions of Theorem \ref{main} below,
\begin{enumerate}
    \item $E$ has good ordinary reduction at $5$, 
    \item $\op{rank}E(\Q)=1$, in particular, is $>0$. 
    \item We assume that the dual Selmer group $\op{Sel}_{5^\infty}(E/\Q_{\op{cyc}})^{\vee}$ is torsion over the Iwasawa algebra. Recall that Mazur's conjecture predicts this condition to hold for all elliptic curves with good ordinary reduction at $p$.
    \item The normalized $5$-adic regulator is a $5$-adic unit.
    \item The analytic order of $\Sh(E/\Q)$ is exactly $1$. 
    \item All Tamagawa numbers $c_v(E/\Q)$ are equal to $1$.
    \item We find that $\widetilde{E}(\F_5)=8$, and hence, is not divisible by $5$.
\end{enumerate}
We note here that the calculations above were performed via Sage, cf. \cite{stein2008sage}.

\bibliographystyle{abbrv}
\bibliography{references}

\begin{thebibliography}{10}

\bibitem{coates2000galois}
J.~Coates and R.~Sujatha.
\newblock Galois cohomology of elliptic curves tata inst. fund. res.
\newblock {\em Lecture Notes, Narosa Publishing House}, 2000.

\bibitem{denef1980diophantine}
J.~Denef.
\newblock Diophantine sets over algebraic integer rings. {II}.
\newblock {\em Transactions of the American Mathematical Society},
  257(1):227--236, 1980.

\bibitem{denef1978diophantine}
J.~Denef and L.~Lipshitz.
\newblock Diophantine sets over some rings of algebraic integers.
\newblock {\em Journal of the London Mathematical Society}, 2(3):385--391,
  1978.

\bibitem{garcia2020towards}
N.~Garcia-Fritz and H.~Pasten.
\newblock Towards hilbert’s tenth problem for rings of integers through
  {I}wasawa theory and heegner points.
\newblock {\em Mathematische Annalen}, 377(3):989--1013, 2020.

\bibitem{greenberg1999iwasawa}
R.~Greenberg.
\newblock Iwasawa theory for elliptic curves.
\newblock In {\em Arithmetic theory of elliptic curves}, pages 51--144.
  Springer, 1999.

\bibitem{hachimori1999analogue}
Y.~Hachimori and K.~Matsuno.
\newblock An analogue of {K}ida's formula for the selmer groups of elliptic
  curves.
\newblock {\em Journal of Algebraic Geometry}, 8(3):581--601, 1999.

\bibitem{iwasawa1973zl}
K.~Iwasawa.
\newblock On $\mathbb{Z}_\ell$-extensions of algebraic number fields.
\newblock {\em Annals of Mathematics}, pages 246--326, 1973.

\bibitem{kato2004p}
K.~Kato et~al.
\newblock p-adic {H}odge theory and values of zeta functions of modular forms.
\newblock {\em Ast{\'e}risque}, 295:117--290, 2004.

\bibitem{kundu2021statistics}
D.~Kundu and A.~Ray.
\newblock Statistics for {I}wasawa invariants of elliptic curves.
\newblock {\em Transactions of the American Mathematical Society},
  374(11):7945--7965, 2021.

\bibitem{matiyasevich1970diophantineness}
Y.~V. Matiyasevich.
\newblock The {D}iophantineness of enumerable sets.
\newblock In {\em Doklady Akademii Nauk}, volume 191, pages 279--282. Russian
  Academy of Sciences, 1970.

\bibitem{mazur1986p}
B.~Mazur, J.~Tate, and J.~Teitelbaum.
\newblock On p-adic analogues of the conjectures of {B}irch and
  {S}winnerton-{D}yer.
\newblock {\em Invent. math}, 84(1):1--48, 1986.

\bibitem{murty1997modular}
V.~K. Murty.
\newblock Modular forms and the {C}hebotarev density theorem {II}.
\newblock {\em London Mathematical Society Lecture Note Series}, pages
  287--308, 1997.

\bibitem{perrin1994theorie}
B.~Perrin-Riou.
\newblock Th{\'e}orie d'{I}wasawa des repr{\'e}sentationsp-adiques sur un corps
  local.
\newblock {\em Inventiones {Mathematicae}}, 115(1):81--149, 1994.

\bibitem{pheidas1988hilbert}
T.~Pheidas.
\newblock Hilbert’s tenth problem for a class of rings of algebraic integers.
\newblock {\em Proceedings of the American Mathematical Society},
  104(2):611--620, 1988.

\bibitem{schneider1982p}
P.~Schneider.
\newblock p-adic height pairings {I}.
\newblock {\em Inventiones {Mathematicae}}, 69(3):401--409, 1982.

\bibitem{schneider1985p}
P.~Schneider.
\newblock p-adic height pairings. {II}.
\newblock {\em Inventiones {Mathematicae}}, 79(2):329--374, 1985.

\bibitem{shapiro1989diophantine}
H.~N. Shapiro and A.~Shlapentokh.
\newblock Diophantine relationships between algebraic number fields.
\newblock {\em Communications on Pure and Applied Mathematics},
  42(8):1113--1122, 1989.

\bibitem{shlapentokh1989extension}
A.~Shlapentokh.
\newblock Extension of {H}ilbert's tenth problem to some algebraic number
  fields.
\newblock {\em Communications on Pure and Applied Mathematics}, 42(7):939--962,
  1989.

\bibitem{shlapentokh2008elliptic}
A.~Shlapentokh.
\newblock Elliptic curves retaining their rank in finite extensions and
  {H}ilbert’s tenth problem for rings of algebraic numbers.
\newblock {\em Transactions of the American Mathematical Society},
  360(7):3541--3555, 2008.

\bibitem{stein2008sage}
W.~Stein et~al.
\newblock Sage: {O}pen source mathematical software, 2008.

\bibitem{videla16decimo}
C.~Videla.
\newblock Sobre el d{\'e}cimo problema de hilbert.
\newblock {\em Atas da Xa Escola de Algebra, Vitoria, ES, Brasil. Colecao
  Atas}, 16:95--108.

\bibitem{washington1997introduction}
L.~C. Washington.
\newblock {\em Introduction to cyclotomic fields}, volume~83.
\newblock Springer {S}cience \& {B}usiness {M}edia, 1997.

\end{thebibliography}

\end{document}